\let\accentvec\vec
\let\vec\accentvec
\newcommand{\N}{\mathbb{N}}					% L'ensemble des entiers naturels
\newcommand{\R}{\mathbb{R}}					% Le corps des nombres reels
\newcommand{\xb}{\mathbf{x}} 
\newcommand{\kb}{k} 
\newcommand{\ib}{i}
\newcommand{\hb}{h} 
\newcommand{\gb}{\mathbf{g}} 
\newcommand{\gbeps}{\mathbf{g}_\epsilon} 
\newcommand{\qb}{\mathbf{q}}  
\newcommand{\yb}{\mathbf{y}}
\newcommand{\don}{[d]_0^n}
\newcommand{\resp}{resp.~}
\newcommand{\pminQ}{p_{\min, Q}}
\newcommand{\phanr}{p_{\text{han}, \qb}^{(r)}}
\newcommand{\pschr}{p_{\text{sch}, \qb}^{(r)}}
\newcommand{\pputr}{p_{\text{put}, \gb}^{(r)}}
\newcommand{\pputreps}{p_{\text{put}, \gbeps}^{(r)}}
\newcommand{\pmaxQ}{p_{\max, Q}}
\begin{document}

\title{Error Bounds for Polynomial Optimization over the Hypercube using Putinar type Representations}
%\subtitle{Do you have a subtitle?\\ If so, write it here}

\titlerunning{Error Bounds for Polynomial Optimization over the Hypercube using Putinar type Representations}        % if too long for running head

\author{Victor Magron \thanks{The author was partly supported by an award of the Simone and Cino del Duca foundation of Institut de France.}}

%\authorrunning{Short form of author list} % if too long for running head

\institute{V. Magron \at
              LAAS-CNRS 7 avenue du colonel Roche, F-31400 Toulouse France. \\
              Tel.: +33 (5) 61 33 69 49\\
              \email{magron@laas.fr}
}

\date{Received: date / Accepted: date}
% The correct dates will be entered by the editor
\maketitle
\begin{abstract}
Consider the optimization problem $p_{\min, Q}  :=  \min_{\mathbf{x} \in Q} p(\mathbf{x})$, where $p$ is a degree $m$ multivariate polynomial and $Q := [0, 1]^n$ is the hypercube. We provide explicit degree and error bounds for the sums of squares approximations of $p_{\min, Q}$ corresponding to the Positivstellensatz of Putinar.  
Our approach uses Bernstein multivariate approximation of polynomials, following the methodology of De Klerk and Laurent to provide error bounds for Schmüdgen type positivity certificates over the hypercube. We give new bounds for Putinar type representations by relating the quadratic module and the preordering associated with the polynomials $g_i := x_i (1 - x_i), \: i=1,\dots,n$, describing the hypercube $Q$.
%bounds the error between $\pminQ$ and the bound obtained by Lasserre's relaxation of order $r$ when considering the sums of squares relaxation of .
%Here, we explain how to derive some error bounds for Putinar type representations when optimizing multivariate polynomials over the hypercube $Q$.
%Insert your abstract here. Include keywords, PACS and mathematical
%subject classification numbers as needed.
\keywords{Sums of squares relaxations \and Multivariate Bernstein Approximation \and Positive Polynomial \and Semidefinite Programming \and Positivstellensatz \and Preordering \and Quadratic Module}
% \PACS{PACS code1 \and PACS code2 \and more}
% \subclass{MSC code1 \and MSC code2 \and more}
\end{abstract}

\section{Introduction}
\label{sec:intro}

Given a multivariate degree $m$ polynomial $p : \R^n \to \R$ and the unit hypercube $Q := [0, 1]^n$, we consider the following minimization problem:
\begin{equation}
\label{eq:pop}
\pminQ  :=  \min_{\xb \in Q} p(\xb) \:.
\end{equation}
One also defines $\pmaxQ := \max_{\xb \in Q} p(\xb)$.
When $p$ is a quadratic polynomial, this problem includes NP-hard optimization problems in graph, such as maximum cut or stable set problems. 
One way to address Problem~\eqref{eq:pop} is to consider the hierarchy of sums of squares relaxations by Lasserre~\cite{DBLP:journals/siamjo/Lasserre01} to get Putinar type representation~\cite{putinar1993positive} of positive polynomials over $Q$. We refer the interested reader to~\cite{Parrilo2003relax,Las2011book} for more details on these relaxations. Other approaches include  representations derived from the Positivstellensätze of Schmüdgen~\cite{Schmudgen1991} and Handelman~\cite{Handelman1988}. The complexity of Putinar (\resp{}Schmüdgen) Positivstellensätz has been investigated by Nie and Schweighofer in~\cite{DBLP:journals/jc/NieS07} (\resp{}Schweighofer in~\cite{DBLP:journals/jc/Schweighofer04}). However, the error bounds for the approximation obtained from these Positivstellensätze involve some constants, which depend on the problem data and are not trivial to estimate in general. 

This work is a followup of~\cite{deKlerk:2010:Error}, in which the authors use Bernstein multivariate approximation to derive some explicit error bounds for Schmüdgen/Handelman type representations. One of the concluding remarks of~\cite{deKlerk:2010:Error} is that some error bounds for Putinar type representation when optimizing quadratic polynomials over the hypercube can be obtained if the so-called ``$C_n = \frac{1}{n (n + 2)}$ conjecture'' is true.
%conjecture that the quadratic module
Assuming that this conjecture holds, we provide new error bounds for Putinar type representations when minimizing multivariate polynomials over $Q$.

Note that even though the present results rely on this assumption in general, the conjecture can be machine-checked for small integers ($r \leq 6$). Hence, this note states an additional explicit error bound for approximating polynomial optimization problems involving a small number of variables.

\subsection{Preliminaries}
\label{subsect:pre}
Let $\R[\xb]$ (resp.~$\R_{d}[\xb]$) denote the ring of real polynomials (resp. of degree at most $d$) in the variables $\xb=(x_1,\ldots,x_n)$,
while $\Sigma[\xb]$ stands for its subset of sums of squares (SOS) of polynomials.
Given a univariate polynomial $q$,  we define $B_d(q) \in \R_d[x]$ as the Bernstein degree $d$ approximation of $q$:
\begin{align*}
\label{def:bernstein_uni}
B_d(q) := \sum_{k = 0}^d q \Bigl(\frac{k}{d}\Bigr) \binom{d}{k}x^k(1 - x)^{d - k} \enspace.
\end{align*}

We note $\don := \{ 0, 1, \dots, d \}^n$.
For each $\kb = (k_1, \dots, k_n) \in \don$, we set $|\kb|:= \sum_{i = 1}^n k_i$ and $\kb! := \prod_{i = 1}^nk_i!$. Then, the $n$-variate Bernstein polynomial $P_{d, \kb}$ is defined as follows:
\[ P_{d, \kb} := \prod_{i = 1}^n \binom{d}{k_i} x_i^{k_i} (1 - x_i)^{d - k_i}  \enspace .\]

Given an $n$-variate polynomial $p = \sum_{\kb} p_{\kb} \xb^\kb$, we note $B_d(p)$ the Bernstein approximation of order $d$ of $p$. The degree $n d$ polynomial $B_d(p)$ is defined by:
\begin{align*}
\label{def:bernstein_multi}
B_d(p) := \sum_{k_1 = 0}^d \dots \sum_{k_n = 0}^d p \Bigl(\frac{k_1}{d}, \dots, \frac{k_n}{d} \Bigr) P_{d, \kb} \enspace.
\end{align*}
For the sequel, we also need to define $L(p) := \max_{\kb} | p_{\kb} | \dfrac{\kb!}{|\kb|!} $.
%Schmüdgen

\subsection{Error bounds for Schmüdgen type representations}
One can describe the hypercube $Q := [0, 1]^n$ with the linear polynomials:
\begin{equation}
\label{eq:linearg}
q_1 := x_1, \: q_2 := 1 - x_1, \dots, q_{2 n-1} := x_n, \: q_{2 n} := 1 - x_n \:,
\end{equation}
and denote by $H_r(\qb)$ (\resp{}$T_r(\qb)$) the $r$-truncated preprime (\resp{}preordering) generated by $q_1, \dots, q_{2n}$:
\begin{align}
\label{eq:preorder}
H_r(\qb) & := \left\lbrace \sum_{\kb \in \N^{2n}} \lambda_k \qb^k : \: \deg (\lambda_k \qb^k) \leq r, \: \lambda_k \geq 0  \right\rbrace \:, \\
T_r(\qb) & := \left\lbrace \sum_{\kb \in \{0, 1\}^{2 n}} \sigma_k \qb^k : \: \deg (\sigma_k \qb^k) \leq r, \: \sigma_k \in \Sigma[\xb]  \right\rbrace \:.
\end{align}
Moreover, let $H(\qb):=\bigcup_{r \in \N}H_r(\qb)$ (\resp{}$T(\qb):=\bigcup_{r \in \N}T_r(\qb)$) stands for the preprime (\resp{}preordering) generated by $q_1, \dots, q_{2n}$.

\begin{theorem}[Handelman~\cite{Handelman1988}]
Let $p \in \R[\xb]$, with $\qb$ as in~\eqref{eq:linearg}. If $p$ is positive on $Q$, then $p$ belongs to the preprime $H(\qb)$.
\end{theorem}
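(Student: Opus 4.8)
\emph{Proof sketch.} The plan is to prove this classical statement by a Bernstein-basis argument: expand $p$ itself in the multivariate Bernstein basis $\{P_{d,\kb}\}_{\kb\in\don}$ of a sufficiently large degree $d\geq m=\deg p$ and show that all of its coefficients are then nonnegative. The starting observation is purely algebraic: since $q_{2i-1}=x_i$ and $q_{2i}=1-x_i$, every Bernstein polynomial factors as
\[
P_{d,\kb}=\Bigl(\prod_{i=1}^{n}\binom{d}{k_i}\Bigr)\,q_1^{\,k_1}q_2^{\,d-k_1}q_3^{\,k_2}q_4^{\,d-k_2}\cdots q_{2n-1}^{\,k_n}q_{2n}^{\,d-k_n}\,,
\]
i.e.\ a positive scalar times a monomial of degree $nd$ in $q_1,\dots,q_{2n}$; hence $P_{d,\kb}\in H_{nd}(\qb)$. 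Since the $(d+1)^n$ polynomials $P_{d,\kb}$ form a basis of the space of polynomials of degree at most $d$ in each variable, $p$ has a unique expansion $p=\sum_{\kb\in\don}c_{d,\kb}P_{d,\kb}$, and it suffices to prove that $c_{d,\kb}\geq 0$ for all $\kb$ once $d$ is large: this immediately gives $p\in H_{nd}(\qb)\subseteq H(\qb)$.

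The heart of the matter is the uniform estimate $\max_{\kb\in\don}\bigl|c_{d,\kb}-p(k_1/d,\dots,k_n/d)\bigr|\to 0$ as $d\to\infty$. I emphasize that this is \emph{not} the elementary fact that $B_d(p)$ converges uniformly to $p$: the genuine coefficients $c_{d,\kb}$ of $p$ are in general different from the sampled values $p(\kb/d)$ defining $B_d(p)$. To obtain the estimate I would start from the monomial expansion $p=\sum_{\alpha}p_\alpha\xb^\alpha$ and the one-dimensional identity $x^{a}=\sum_{l=a}^{d}\binom{l}{a}\binom{d}{a}^{-1}\binom{d}{l}x^{l}(1-x)^{d-l}$, valid for $d\geq a$, which after multiplying out over the $n$ coordinates gives the closed form $c_{d,\kb}=\sum_{\alpha}p_\alpha\prod_{i=1}^{n}\binom{k_i}{\alpha_i}\binom{d}{\alpha_i}^{-1}$. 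Comparing this termwise with $p(\kb/d)=\sum_\alpha p_\alpha\prod_i(k_i/d)^{\alpha_i}$, using the elementary bound $\bigl|\binom{k}{a}\binom{d}{a}^{-1}-(k/d)^{a}\bigr|\leq C_a/d$ for $0\leq k\leq d$ (proved by a telescoping estimate on the factors of $k(k-1)\cdots(k-a+1)/\bigl(d(d-1)\cdots(d-a+1)\bigr)$, all lying in $[0,1]$), together with $\sum_\alpha|p_\alpha|\leq L(p)\sum_{j=0}^{m}n^{j}$, yields $|c_{d,\kb}-p(\kb/d)|\leq \mathrm{const}(n,m)\,L(p)/d$ uniformly in $\kb$. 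This is also where the quantity $L(p)$ from the preliminaries enters naturally.

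With this estimate the proof closes immediately. Since $p$ is positive on the compact set $Q$, we have $\varepsilon:=\min_{\xb\in Q}p(\xb)>0$, and every grid point $(k_1/d,\dots,k_n/d)$ lies in $Q$, so $p(\kb/d)\geq\varepsilon$ for all $\kb\in\don$. Fixing $d$ large enough that the uniform error is at most $\varepsilon/2$ forces $c_{d,\kb}\geq\varepsilon/2>0$ for every $\kb$, and therefore $p=\sum_{\kb\in\don}c_{d,\kb}P_{d,\kb}\in H_{nd}(\qb)\subseteq H(\qb)$.

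The main obstacle is exactly the uniform convergence of the \emph{true} Bernstein coefficients $c_{d,\kb}$ to the point values $p(\kb/d)$ — rather than the mere convergence of the Bernstein operator $B_d(p)$, which would not suffice since $H(\qb)$ is not closed — and keeping the associated error constant explicit; the passage from the multivariate to the one-dimensional setting is routine because $P_{d,\kb}$, the monomial expansion, and the coefficient map all factor as tensor products over the $n$ coordinates, and the final bookkeeping that turns nonnegative Bernstein coefficients into membership in $H(\qb)$ is immediate from the displayed factorization.
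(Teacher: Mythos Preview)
The paper does not actually prove this theorem: it is stated as a background result and attributed to Handelman~\cite{Handelman1988} with no argument given, so there is no ``paper's own proof'' to compare against. Your sketch, however, is a correct and self-contained proof of the hypercube case, and it is very much in the spirit of the Bernstein techniques the paper (and its main reference~\cite{deKlerk:2010:Error}) uses for the quantitative bounds on $\phanr$ and $\pschr$.

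Your key observation --- that one must control the \emph{exact} Bernstein coefficients $c_{d,\kb}$ of $p$, not the sampled values $p(\kb/d)$ defining $B_d(p)$ --- is exactly the right point, and your derivation of $c_{d,\kb}=\sum_{\alpha}p_\alpha\prod_i\binom{k_i}{\alpha_i}\binom{d}{\alpha_i}^{-1}$ from the univariate identity $x^a=\sum_{l=a}^{d}\binom{d-a}{l-a}x^l(1-x)^{d-l}$ is correct. The estimate $\bigl|\binom{k}{a}\binom{d}{a}^{-1}-(k/d)^a\bigr|=O(1/d)$ uniformly in $0\le k\le d$ is standard (falling-factorial versus power), and the tensor-product structure lets it pass to the multivariate setting without difficulty. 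Once all $c_{d,\kb}>0$, the factorization $P_{d,\kb}\in H_{nd}(\qb)$ finishes the argument. In short: your proof is sound, and it recovers precisely the qualitative statement that the paper merely quotes; the paper's own contribution lies downstream, in turning this kind of Bernstein analysis into explicit degree and error bounds for the truncated cones.
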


\begin{theorem}[Schmüdgen~\cite{Schmudgen1991}]
Let $p \in \R[\xb]$, with $\qb$ as in~\eqref{eq:linearg}. If $p$ is positive on $Q$, then $p$ belongs to the preordering $T(\qb)$.
\end{theorem}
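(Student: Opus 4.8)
The plan is to deduce this statement directly from Handelman's theorem above, by showing that the preprime $H(\qb)$ is contained in the preordering $T(\qb)$ generated by the same linear forms $\qb$ of~\eqref{eq:linearg}. All of the analytic content is then outsourced to Handelman, and what remains is a purely algebraic manipulation of monomials in the $q_i$.

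First I would apply Handelman's theorem: since $p$ is positive on $Q$, there is a finite family of coefficients $\lambda_k \geq 0$, $k \in \N^{2n}$, with $p = \sum_k \lambda_k\, \qb^k$, writing $\qb^k := \prod_{i=1}^{2n} q_i^{k_i}$. Next I would reduce each exponent vector modulo $2$: write $k = 2a + b$ with $b \in \{0,1\}^{2n}$ and $a \in \N^{2n}$, so that
\[ \lambda_k\, \qb^k = \Bigl( \sqrt{\lambda_k}\; \qb^a \Bigr)^{2} \qb^b \enspace . \]
The factor $\bigl( \sqrt{\lambda_k}\,\qb^a \bigr)^{2}$ lies in $\Sigma[\xb]$, the exponent $b$ lies in $\{0,1\}^{2n}$, and the total degree is unchanged since $2|a| + |b| = |k|$. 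Collecting the terms sharing the same reduced exponent $b$ gives $p = \sum_{b \in \{0,1\}^{2n}} \sigma_b\, \qb^b$ with $\sigma_b \in \Sigma[\xb]$, that is, $p \in T(\qb)$. The same bookkeeping shows the stronger inclusion $H_r(\qb) \subseteq T_r(\qb)$ for every $r$, so any degree bound known for Handelman certificates over $Q$ transfers verbatim to Schmüdgen certificates.

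The only genuine obstacle is Handelman's theorem itself; the reduction above is routine. If one instead wanted a proof of the Schmüdgen representation over $Q$ that bypasses Handelman, the natural route would be Schmüdgen's original argument: combine Stone--Weierstrass density of the polynomial algebra with a spectral / GNS-type argument exploiting the compactness of $Q$ (equivalently, the archimedean property of the quadratic module generated by the $q_i$, which holds here because $Q$ is bounded), and then pass from ``strictly positive'' to ``membership in the preordering'' via an approximation together with a closedness argument for the truncated cones. The delicate step in that approach is precisely establishing this archimedean / closedness property; by contrast, the monomial reduction from Handelman is immediate, so that is the path I would take here.
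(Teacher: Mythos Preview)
The paper does not give its own proof of this statement; it is quoted as a background result with a citation to Schmüdgen's original article, just like the Handelman theorem stated immediately before it. So there is nothing to compare your argument against.

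That said, your derivation is correct. Writing each exponent $k=2a+b$ with $b\in\{0,1\}^{2n}$ and using $\lambda_k\ge 0$ to factor $\lambda_k\,\qb^{k}=(\sqrt{\lambda_k}\,\qb^{a})^{2}\qb^{b}$ is exactly the computation that proves the inclusion $H_r(\qb)\subseteq T_r(\qb)$ for every $r$ (since the $q_i$ are linear, the degree count $2|a|+|b|=|k|$ goes through verbatim). The paper in fact invokes this very inclusion in the sentence following the two theorems, without spelling it out. Once Handelman's theorem is granted as a black box, Schmüdgen's representation over $Q$ is therefore an immediate corollary, precisely as you argue. Your closing remark is also accurate: the genuine analytic content has been pushed into Handelman's theorem, and the alternative route via the archimedean property and a GNS/spectral argument is what Schmüdgen's original proof does in the general compact setting.
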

Now, define $\phanr :=  \sup \{\mu : p - \mu \in H_r(\qb)\}$ and $\pschr :=  \sup \{\mu : p - \mu \in T_r(\qb)\}$.  From the inclusion $H_r(\qb) \subset T_r(\qb) \subset T(\qb)$, one can deduce that $\phanr \leq \pschr \leq \pminQ$.
In~\cite{deKlerk:2010:Error},  DeKlerk and Laurent give explicit tight error bounds for $\phanr$ and $\pschr$:

\begin{theorem}[from Theorem 1.4~\cite{deKlerk:2010:Error}]
Let $Q$ be described by the polynomials $q_1,\dots,q_{2n}$ from~\eqref{eq:linearg} and let $p \in \R_m[\xb]$. For any integer $r \geq m n$, one has:
\[\max \{\pminQ -  \phanr, \pminQ - \pschr\} \leq  \frac{m^3 n^{m+1}}{6} \frac{L(p)}{r}  \:.\]
\end{theorem}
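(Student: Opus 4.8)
The plan is to reconstruct the argument of De Klerk and Laurent, which extracts an explicit Handelman certificate of $p$ from its Bernstein coefficients. First, since $H_r(\qb)\subseteq T_r(\qb)$ one has $\phanr\le\pschr\le\pminQ$, so the maximum on the left equals $\pminQ-\phanr$ and it suffices to lower bound $\phanr$. Fix $d:=\lfloor r/n\rfloor$; because $r\ge mn$ we have $d\ge m\ge\deg p$ and $nd\le r$. Then $p$ has a unique expansion $p=\sum_{\kb\in\don}a_\kb\,P_{d,\kb}$ in the degree-$d$ Bernstein basis, and each $P_{d,\kb}$ is $\prod_i\binom{d}{k_i}$ times a product of $nd\le r$ of the linear forms $q_j$, hence lies in $H_{nd}(\qb)\subseteq H_r(\qb)$; since moreover $\sum_\kb P_{d,\kb}=1$, for any $\mu\le\min_\kb a_\kb$ we get $p-\mu=\sum_\kb(a_\kb-\mu)P_{d,\kb}\in H_r(\qb)$, i.e. $\phanr\ge\min_\kb a_\kb$. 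So the proof reduces to showing $\min_\kb a_\kb\ge\pminQ-\tfrac{m^3 n^{m+1}}{6}\tfrac{L(p)}{r}$.

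Second, I would compare the Bernstein coefficients $a_\kb$ with the grid values $p(\kb/d)$, which are trivially $\ge\pminQ$. The key tool is the exact reproduction identity $x^k=\sum_{j=0}^d\frac{(j)_k}{(d)_k}\binom{d}{j}x^j(1-x)^{d-j}$ with $(t)_k:=t(t-1)\cdots(t-k+1)$, which follows from $\mathbb{E}\big[\binom{J}{k}\big]=\binom{d}{k}x^k$ for $J\sim\mathrm{Bin}(d,x)$; taking products over the $n$ coordinates gives the Bernstein coefficient of a monomial $\xb^\kb$ at index $\mathbf{j}\in\don$ as $\prod_i\frac{(j_i)_{k_i}}{(d)_{k_i}}$. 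By linearity, writing $p=\sum_\kb p_\kb\,\xb^\kb$, one has $a_{\mathbf{j}}-p(\mathbf{j}/d)=\sum_\kb p_\kb\big(\prod_i\tfrac{(j_i)_{k_i}}{(d)_{k_i}}-\prod_i(j_i/d)^{k_i}\big)$, and since $0\le\prod_i\tfrac{(j_i)_{k_i}}{(d)_{k_i}}\le\prod_i(j_i/d)^{k_i}$ this yields
\[
a_{\mathbf{j}} \ge p(\mathbf{j}/d)-\sum_\kb|p_\kb|\,\beta_\kb \ge \pminQ-\sum_\kb|p_\kb|\,\beta_\kb,\qquad \beta_\kb:=\max_{\mathbf{j}\in\don}\Big(\prod_i(j_i/d)^{k_i}-\prod_i\tfrac{(j_i)_{k_i}}{(d)_{k_i}}\Big).
\]
Combined with the first step this gives $\phanr\ge\pminQ-\sum_\kb|p_\kb|\beta_\kb$.

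Third, I would estimate $\sum_\kb|p_\kb|\beta_\kb$. Using $\prod_i u_i-\prod_i v_i\le\sum_i(u_i-v_i)$ for $u_i,v_i\in[0,1]$ together with a univariate bound of the shape $0\le(j/d)^k-\tfrac{(j)_k}{(d)_k}\le\tfrac{k(k-1)}{2d}$ — proved by telescoping $\tfrac{j-\ell}{d-\ell}=\tfrac{j}{d}-\tfrac{\ell(d-j)}{d(d-\ell)}$ over $\ell=0,\dots,k-1$ — one gets $\beta_\kb\le\phi(|\kb|)/d$ for an explicit quadratic $\phi$. Inserting $|p_\kb|\le L(p)\,\tfrac{|\kb|!}{\kb!}$ and summing via the multinomial identity $\sum_{|\kb|=j}\tfrac{j!}{\kb!}=n^j$ turns the sum into $\tfrac{L(p)}{d}\sum_{j=0}^m\phi(j)\,n^j$, a geometric-type series dominated by its top term of order $m^2 n^m$; finally $d\ge r/n-1$ converts $1/d$ into $O(n/r)$, and collecting the constants gives the bound $\tfrac{m^3 n^{m+1}}{6}\tfrac{L(p)}{r}$, which then also bounds $\pminQ-\pschr$ since $\pschr\ge\phanr$.

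I expect the structural ingredients (the Bernstein positivity certificate, the reproduction identity, and the transfer from $a_{\mathbf{j}}$ to $p(\mathbf{j}/d)$) to be routine; the main obstacle is the quantitative bookkeeping in the third step — choosing the univariate estimate for $(j/d)^k-(j)_k/(d)_k$ sharply enough, carrying out the coordinatewise telescoping without losing the correct power of $n$, summing $\sum_j j^2 n^j$, and tracking the floor in $d=\lfloor r/n\rfloor$ — so as to land exactly on the stated constant $\tfrac{m^3 n^{m+1}}{6}$.
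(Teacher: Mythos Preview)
The paper does not supply a proof for this theorem: it is quoted verbatim as a background result from De~Klerk and Laurent~\cite{deKlerk:2010:Error} (Theorem~1.4 there), so there is no in-paper argument to compare against. Your reconstruction is nonetheless faithful to the methodology of that source, and in fact the present paper's own Theorem~\ref{th:putinar_bound} explicitly reuses the very same ingredients you invoke --- the Bernstein expansion of monomials, the coefficients $a_{i_j}^{(k_j)}$ satisfying $|a_{i_j}^{(k_j)}|,\ \sum_{i_j=0}^{k_j-1}a_{i_j}^{(k_j)}\le\tfrac{1}{d}\binom{k_j}{2}$, the bound $|p_\kb|\le L(p)\,|\kb|!/\kb!$, and the multinomial summation yielding the factor $\binom{m+1}{3}n^m$ --- all attributed to~\cite[Sect.~3.2]{deKlerk:2010:Error}. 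So your skeleton (Bernstein-basis certificate, comparison of the coefficients $a_{\mathbf j}$ with the grid values $p(\mathbf j/d)$ via the falling-factorial reproduction identity, then the combinatorial bookkeeping) is exactly the intended one.

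One quantitative point to watch: in De~Klerk--Laurent the bound is derived for certificates of degree exactly $nd$, giving an error $\le\binom{m+1}{3}n^m L(p)/d$; setting $r=nd$ this is precisely $\tfrac{m^3 n^{m+1}}{6}\,L(p)/r$ (using $\binom{m+1}{3}\le m^3/6$). Your choice $d=\lfloor r/n\rfloor$ for general $r$ only gives $1/d\le n/(r-n)$, which does not directly yield the clean $1/r$ constant; you should instead argue via monotonicity $\phanr\ge p_{\text{han},\qb}^{(nd)}$ for $nd\le r$ and then observe that the stated bound for all $r\ge mn$ follows because it is already established at the multiples of $n$ and the right-hand side is decreasing in $r$ --- but note this still leaves a small slack unless one checks the original paper's handling of non-multiples of $n$.
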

The bound is sharper in the quadratic case $m = 2$. 

\subsection{Error bounds for Putinar type representations}
\label{subsect:main}
%\cite[Theorem 1.4]{deKlerk:2010:Error}
%
%We remind the results of Nie and Schweighofer which analyze the error for the approximation of 
Alternatively, the hypercube $Q := [0, 1]^n$ can be  described with the polynomials 
\begin{equation}
\label{eq:g}
g_1 := x_1 - x_1^2, \dots, g_n := x_n - x_n^2 \:.
\end{equation}
Let $M_n(\gb) := M_n(g_1, \dots, g_n)$ be the $n$-truncated quadratic module associated with $g_i, \: i=1,\dots,n$.
Define $\pputr := \sup \{\mu : p - \mu \in M_r(\gb)\}$ as the lower bound obtained when solving Lasserre relaxation of Problem~\eqref{eq:pop} at order $r$. In~\cite[Sect. 4]{deKlerk:2010:Error}, the authors conjecture the following:

%The following conjecture allows to derive error bounds for Putinar type representations on the hypercube.
\begin{conjecture}[De Klerk, Laurent~\cite{deKlerk:2010:Error}]
\label{conj:boundconj}
For $n$ even, $\prod_{i = 1}^n x_i + \dfrac{1}{n (n + 2)} \in M_n(g_1, \dots, g_n)$.
\end{conjecture}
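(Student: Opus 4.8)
The plan is to produce an explicit Putinar certificate
\[
\prod_{i=1}^n x_i + \frac{1}{n(n+2)} = \sigma_0 + \sum_{i=1}^n \sigma_i\,g_i,
\qquad \sigma_0\in\Sigma[\xb],\ \deg\sigma_0\le n,\qquad \sigma_i\in\Sigma[\xb],\ \deg\sigma_i\le n-2 .
\]
The first move is symmetry reduction: the left-hand side is $S_n$-invariant and the polynomials $g_1,\dots,g_n$ are permuted among themselves by $S_n$, so averaging any certificate under the Reynolds operator we may assume $\sigma_0$ is $S_n$-invariant and that $(\sigma_1,\dots,\sigma_n)$ is one $S_n$-orbit, \ie $\sigma_i=\sigma\circ\pi_i$ with $\sigma$ invariant under the stabiliser $S_{n-1}$ of the index $i$. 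After this reduction the unknowns are symmetric functions, the Gram-matrix feasibility problem block-diagonalises along its isotypic components, and for each fixed even $n$ one obtains a finite semidefinite feasibility problem --- which is precisely why the statement can be machine-checked for $n\le 6$.

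For the uniform claim I would search for a closed-form ansatz guided by the base case $n=2$, where completing the square gives $x_1x_2+\tfrac18=\tfrac12\bigl(x_1+x_2-\tfrac12\bigr)^2+\tfrac12 g_1+\tfrac12 g_2$, with $\tfrac18=\tfrac1{n(n+2)}$. Expanding $\prod_i x_i=\prod_i(x_i^2+g_i)=\sum_{S\subseteq\{1,\dots,n\}}\bigl(\prod_{i\notin S}x_i\bigr)^2\prod_{i\in S}g_i$ already certifies $\prod_i x_i\in T(\qb)$, since $g_i=q_{2i-1}q_{2i}$ makes each summand a square times a monomial in the $q_j$; so the entire content of the conjecture is that the products $\prod_{i\in S}g_i$ with $|S|\ge 2$, together with the excess degree of the $|S|=1$ terms, can be re-absorbed into the \emph{quadratic module} $M_n(\gb)$ at the sole price of the constant $\tfrac1{n(n+2)}$. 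The natural candidate keeps $\sigma_0$ a nonnegative combination of squares of symmetric polynomials of degree $n/2$ (powers and products of $e_1-c$ and of the pairwise factors $x_{2k-1}+x_{2k}-\tfrac12$), which reproduce the leading monomial $\prod_i x_i$ modulo squares of monomials; those leftover monomial squares are in turn absorbed by multipliers $\sigma_i$ whose top parts have degree exactly $n-2$ --- \eg $\tfrac12\bigl(\prod_{2\le i\le n/2}x_i\bigr)^2 g_1$ has top part $-\tfrac12\bigl(\prod_{1\le i\le n/2}x_i\bigr)^2$, and the fact that the degree bound $n-2$ is saturated rather than merely respected is consistent with level $n$ being the minimal one. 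Substituting the ansatz and matching coefficients then turns everything into a system of polynomial (in)equalities in finitely many nonnegative parameters together with the single scalar $\tfrac1{n(n+2)}$, to be solved in closed form for all even $n$.

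It may be more convenient to work dually. Since $\pminQ=0$ for $p=\prod_i x_i$, the conjecture is equivalent to the bound $L\bigl(\prod_i x_i\bigr)\ge-\tfrac1{n(n+2)}$ for every unit-mass linear functional $L$ on $\R_n[\xb]$ whose order-$n/2$ moment matrix and order-$(n/2-1)$ localising matrices for the $g_i$ are positive semidefinite. Symmetrising $L$ over $S_n$ preserves all constraints and the value $L(\prod_i x_i)$, reducing the inequality to an estimate on symmetric pseudo-moments; the extremal $L$ --- the one that actually pins down the value $\tfrac1{n(n+2)}$ --- should be explicit and concentrated on the diagonal together with the coordinate faces $x_i=0$, and matching it with the primal certificate would prove the inequality and its sharpness simultaneously. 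That sharpness is exactly the obstacle: because $\prod_i x_i$ vanishes to order $n$ at the minimiser $x=0$, the certificate carries essentially no degree slack, so the SOS multipliers are rigidly pinned down and one cannot fall back on the cruder higher-degree constructions that suffice for the Schmüdgen and Handelman error bounds recalled above. Producing a closed-form family of degree-$\le n$ multipliers valid for all even $n$ at once --- rather than resolving each $n$ numerically --- is the step I expect to be genuinely hard, and is why the statement is still only a conjecture.
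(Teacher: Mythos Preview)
This statement is labelled a \emph{conjecture} in the paper and is never proved there; the paper assumes it as a hypothesis in order to derive Theorems~\ref{th:main} and~\ref{th:error_bound}. There is thus no proof in the paper for your attempt to be compared against.

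Your text is not a proof either, and you acknowledge as much in your final sentence: it is an outline of possible attacks. The pieces you assemble are sound as far as they go --- the $S_n$-averaging reduction is standard and correct, your $n=2$ certificate $x_1x_2+\tfrac18=\tfrac12\bigl(x_1+x_2-\tfrac12\bigr)^2+\tfrac12 g_1+\tfrac12 g_2$ checks out, and the identity $x_i=x_i^2+g_i$ indeed places $\prod_i x_i$ in the preordering --- but none of this supplies what is actually missing: for each even $n$, explicit SOS polynomials $\sigma_0,\dots,\sigma_n$ of the stated degrees satisfying the identity, or a matching dual bound on all feasible pseudo-moment functionals. The ansatz and the dual description you sketch are plausible starting points, yet neither is carried through to a verifiable certificate or inequality. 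In short, the genuine gap is that no proof is given, by you or by the paper; the conjecture remains open exactly as the paper treats it.
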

This conjecture allows to derive error bounds for Putinar type representations on the hypercube. The authors of~\cite[Sect. 4]{deKlerk:2010:Error} provide an explicit error bound for the quadratic case (see Table~\ref{table:main}). 
We also remind the error bound obtained by Nie and Schweighofer~\cite{DBLP:journals/jc/NieS07}:
\begin{theorem}
\label{th:ns07}
Let $p \in \R_m[\xb]$, $\epsilon > 0$ and $S$ be the hypercube $[0, 1 - \epsilon]^n$, described with the polynomials $g_i:=x_i (1 - \epsilon - x_i), \: i=1,\dots,n$. Let us call $M_r(\gbeps)$ the quadratic module associated with $g_1, \dots, g_n$.
Then, there exists a positive constant $c$ such that:
\begin{enumerate}[noitemsep,topsep=0pt,label={(\roman*)}]
\item If $p$ is positive on $S$, then $p \in M_r(\gbeps)$ for some integer $r \leq c \exp \left(   \left(  m^2 n^m  \dfrac{L(p)}{\pminQ} \right)^c  \right) $.
\item For every integer $r \geq c \exp( (2 m^2 n^m)^c )$, $\pminQ - \pputreps \leq  \dfrac{6 m^3 n^{2 m} L(p)}{\sqrt[c]{\log \frac{r}{c}}  }$.
\end{enumerate}
\end{theorem}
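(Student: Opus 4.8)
The plan is to deduce Theorem~\ref{th:ns07} from the effective form of Putinar's Positivstellensatz due to Nie and Schweighofer~\cite{DBLP:journals/jc/NieS07}, by specializing it to the set $S = [0, 1 - \epsilon]^n$ equipped with the quadratic description $g_i = x_i(1 - \epsilon - x_i)$, $i = 1, \dots, n$. Concretely I would: (a) check that the quadratic module $M(\gbeps)$ is Archimedean and exhibit an explicit degree-two compactness certificate; (b) quote from~\cite{DBLP:journals/jc/NieS07} the degree bound for a polynomial positive on $S$, together with the companion estimate for the Lasserre lower bounds $\pputreps$; (c) rewrite those bounds with the normalization $L(p)$ and replace $\min_S p$ by the smaller quantity $\pminQ$, folding the geometric data of the description into the constant $c$.

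For step (a): writing $y_i := 1 - \epsilon - x_i$, one has the identities $g_i + x_i^2 = (1 - \epsilon)\, x_i$ and $g_i + y_i^2 = (1 - \epsilon)\, y_i$; since $g_i \in M(\gbeps)$ and squares belong to $M(\gbeps)$, this already shows $x_i \in M(\gbeps)$ and $1 - \epsilon - x_i \in M(\gbeps)$ for every $i$. Summing the second family and adding the first produces the single degree-two relation $n(1 - \epsilon)^2 - \sum_{i=1}^n x_i^2 = (1 - \epsilon)\sum_{i=1}^n (1 - \epsilon - x_i) + \sum_{i=1}^n g_i \in M(\gbeps)$, so $M(\gbeps)$ is Archimedean with data of bounded degree, and with constants controlled uniformly for $\epsilon$ bounded away from $1$. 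This in turn pins down the quantities in the Nie--Schweighofer bound that depend on the $g_i$: the \L{}ojasiewicz exponent governing the vanishing of the $g_i$ on the facets of $S$ (equal to $1$ here, each $g_i$ vanishing linearly and transversally) and the constant of an effective Positivstellensatz on a Euclidean ball containing $S$.

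For steps (b)--(c): the main theorem of~\cite{DBLP:journals/jc/NieS07}, applied to $p$ (degree $m$, positive on $S$) and written with the normalization $L(p)$, yields $p \in M_r(\gbeps)$ as soon as $r \geq c \exp\bigl( (m^2 n^m\, L(p) / \min_S p)^c \bigr)$; since $\min_S p \geq \pminQ$, the condition on $r$ stated in~(i) is a fortiori sufficient. Its optimization corollary, applied to $p$, gives for $r \geq c \exp\bigl( (2 m^2 n^m)^c \bigr)$ the estimate $\min_S p - \pputreps \leq 6 m^3 n^{2m}\, L(p) / \sqrt[c]{\log(r/c)}$, whence, using $\pminQ \leq \min_S p$, the bound $\pminQ - \pputreps \leq 6 m^3 n^{2m}\, L(p) / \sqrt[c]{\log(r/c)}$ of~(ii). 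Passing from the coefficient norm used in~\cite{DBLP:journals/jc/NieS07} to $L(p)$ merely rescales $c$ and the displayed prefactors, which is harmless since $c$ is only required to exist.

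The reduction above will be essentially bookkeeping; the real content sits inside the quoted theorem, whose proof in~\cite{DBLP:journals/jc/NieS07} passes through a \L{}ojasiewicz inequality and an effective ball-Positivstellensatz (ultimately P\'olya's theorem, after Schweighofer) to reach the double-exponential degree bound. The one point that would genuinely require care on our side is that the constant $c$ produced by that argument can be taken to depend on $n$ only, and not on $\epsilon$; this follows from step~(a), since the $g_i$ all have degree $2$ with the same transversal vanishing pattern and the compactness certificate above has fixed degree with coefficients bounded uniformly for $\epsilon$ away from $1$. In any event, as the statement only claims the existence of $c$ for each fixed $\epsilon$, even this uniformity is not strictly needed.
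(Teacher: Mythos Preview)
Your proposal is correct and follows essentially the same route as the paper: exhibit an explicit degree-two Archimedean certificate for $M(\gbeps)$, then invoke Theorems~6 and~8 of~\cite{DBLP:journals/jc/NieS07}. The paper's certificate is the single identity $M - \sum_i x_i^2 = (1+(1-\epsilon)^2)\sum_i g_i + \sum_i\bigl[(1-\epsilon)x_i - \tfrac{(1-\epsilon)^2+1}{2}\bigr]^2$ with $M = n\,\tfrac{[(1-\epsilon)^2+1]^2}{4}$, slightly different from yours (which in effect gives $n(1-\epsilon)^2 - \sum_i x_i^2 = \sum_i(1-\epsilon-x_i)^2 + 2\sum_i g_i$) but serving the same purpose.
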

\begin{proof}
\label{pr:ns07}
Let define $M := n \frac{[(1 - \epsilon)^2 + 1]^2}{4}$. We prove that $M - \sum_{i=1}^n x_i^2$ belongs to $M_r(\gbeps)$, using the fact that  $M - \sum_{i=1}^n x_i^2 = (1 + (1 - \epsilon)^2) \sum_{i=1}^n g_i(x_i) + \sum_{i=1}^n [  (1 - \epsilon) x -   \frac{(1 - \epsilon)^2 + 1}{2}]^2 $. Thus, the quadratic module $M_r(\gbeps)$ is archimedean. Then, (i) follows from~\cite[Theorem 6]{DBLP:journals/jc/NieS07} and (ii) from~\cite[Theorem 8]{DBLP:journals/jc/NieS07}.\qed
\end{proof}
In this note, we state the following for Putinar type representation:
\begin{theorem}
\label{th:main}
Let $p \in \R_m[\xb]$, $r \geq m$ and the unit hypercube $Q:=[0, 1]^n$ be described with the polynomials $g_i:=x_i (1 - x_i), \: i=1,\dots,n$. Assume that Conjecture~\ref{conj:boundconj} holds. Then, one has:
\begin{enumerate}[noitemsep,topsep=0pt,label={(\roman*)}]
\item If $p$ is positive on $Q$, then $p \in M_r(\gb)$ for some integer $r \leq \left\lceil \exp \left( m^3 n ^{m+1} \dfrac{L(p)}{\pminQ} \right)  \right\rceil$.
\item $\pminQ - \pputr \leq  \dfrac{3}{2} \dfrac{L(p)}{\log_2 r}   \displaystyle\binom{m+1}{3} n^{m + 1} + \dfrac{\pmaxQ - \pminQ}{r + 2}$.
\end{enumerate}
\end{theorem}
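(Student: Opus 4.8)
The plan is to start from the De Klerk and Laurent Bernstein/Handelman machinery of~\cite{deKlerk:2010:Error}, which represents a small perturbation of $p$ in the truncated preprime $H_{s}(\qb)$ of the linear polynomials $q_1,\dots,q_{2n}$ with explicit control of the degree $s$ and of the coefficients, and then to \emph{transfer} this representation into the quadratic module $M_r(\gb)$ using Conjecture~\ref{conj:boundconj}. As the conjecture is stated for $n$ even, I would first reduce to that case by adjoining a dummy variable $x_{n+1}$: the quantities $\pminQ$, $\pmaxQ$, $L(p)$ are unchanged, and specializing $x_{n+1}=0$ turns a decomposition in $M_r(g_1,\dots,g_{n+1})$ into one in $M_r(g_1,\dots,g_n)$ because $g_{n+1}(0)=0$ and SOS multipliers restrict to SOS. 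So assume $n$ even.

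\textbf{The De Klerk--Laurent input.} Fix an integer $d$. Running the argument behind Theorem~1.4 of~\cite{deKlerk:2010:Error} (degree elevation via $x_j+(1-x_j)=1$ applied to a Bernstein-type approximant) produces a value $\mu'$ and a Handelman representation $p-\mu' = \sum_k \lambda_k\,\qb^k$ with $\lambda_k\ge 0$, $\deg(\qb^k)\le nd$, with $\pminQ-\mu'\le \binom{m+1}{3}n^{m+1}\frac{L(p)}{nd}$, and --- crucially for us --- with $\sum_k \lambda_k \le C\,(\pmaxQ-\pminQ)\,2^{nd}$ for an absolute constant $C$ (the binomial weights $\prod_i\binom{d}{k_i}$ carried by the Bernstein basis are what produce the factor $2^{nd}$).

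\textbf{The transfer (key step).} Conjecture~\ref{conj:boundconj} asserts the polynomial identity $\prod_{i=1}^{N}z_i + \frac{1}{N(N+2)} \in M_N\bigl(z_1(1-z_1),\dots,z_N(1-z_N)\bigr)$ for $N$ even, and being an identity it survives any ring homomorphism applied to the $z$'s. Given a monomial $\qb^k$ of degree $\delta\le nd$ and any even degree budget $M\ge \delta+2$, map $\delta$ of the $z_i$ to the linear factors of $\qb^k$ and the remaining $e:=M-\delta$ to the constant $c:=1-\tfrac1e\in(0,1)$. Each $z_j(1-z_j)$ is then sent either to some $g_i$ or to the positive constant $c(1-c)$, so the right-hand side lands in $M_M(\gb)$ (a positive constant generator adds nothing beyond SOS), and dividing by $c^{e}\in[\tfrac14,1)$ gives
\[
\qb^k \;+\; \frac{1}{c^{e}\,M(M+2)} \;\in\; M_M(\gb), \qquad \frac{1}{c^{e}\,M(M+2)} \;\le\; \frac{4}{M^2}.
\]
The new point is exactly this: inflating the number of variables shrinks the unavoidable additive constant like $1/M^2$ while raising the degree only to $M$. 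Multiplying by the nonnegative scalar $\lambda_k$ and summing over $k$ gives $p-\mu'+\gamma \in M_M(\gb)$ with $\gamma := \sum_k \lambda_k\,\frac{1}{c^{e}M(M+2)} \le \frac{4}{M^2}\sum_k\lambda_k \le \frac{4C(\pmaxQ-\pminQ)2^{nd}}{M^2}$.

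\textbf{Assembling, and the main obstacle.} For (ii): take $M=r$ and choose $d$ with $2^{nd}$ of order $r$, i.e. $d$ of order $\frac{\log_2 r}{n}$; then $\pminQ-\mu'$ is of order $\binom{m+1}{3}n^{m+1}\frac{L(p)}{\log_2 r}$ and $\gamma$ of order $\frac{\pmaxQ-\pminQ}{r}$, and since $p-(\mu'-\gamma)\in M_r(\gb)$ one gets $\pminQ-\pputr\le (\pminQ-\mu')+\gamma$; tracking the implicit constants then yields the stated bound $\frac32\binom{m+1}{3}n^{m+1}\frac{L(p)}{\log_2 r}+\frac{\pmaxQ-\pminQ}{r+2}$. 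For (i): if $p>0$ on $Q$, first take $d$ large enough that $\pminQ-\mu'<\tfrac12\pminQ$, which forces $nd$ to be of order $m^3 n^{m+1}\frac{L(p)}{\pminQ}$; then take the final degree $M=r$ large enough that also $\gamma<\tfrac12\pminQ$, which by the bound above needs only $M$ of order $2^{nd/2}$, hence $r\le\bigl\lceil\exp\bigl(m^3 n^{m+1}L(p)/\pminQ\bigr)\bigr\rceil$; then $\mu'-\gamma>0$ and $p=(p-(\mu'-\gamma))+(\mu'-\gamma)\in M_r(\gb)$. The delicate point is twofold: extracting from the proof of~\cite{deKlerk:2010:Error} a Handelman representation whose coefficient sum $\sum_k\lambda_k$ is genuinely $O\bigl((\pmaxQ-\pminQ)2^{nd}\bigr)$, and checking that the variable-inflation step keeps the degree exactly $M$. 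Both the exponential blow-up in (i) and the $1/\log_2 r$ rate in (ii) are direct consequences of this $2^{nd}$-versus-$M^2$ trade-off.
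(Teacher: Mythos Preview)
Your plan is workable and gives a bound of the right shape, but it differs from the paper's argument and will \emph{not} reproduce the specific constants $\tfrac32$ and $\tfrac{1}{r+2}$ as you assert in the assembly step.

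The paper does not transfer a single Handelman certificate of $p-\mu'$ into $M_r(\gb)$. It decouples $p-\pminQ$ into $p-B_d(p)$ and $B_d(p-\pminQ)$ and treats them by different devices. The first piece is placed directly into the \emph{low-degree} module $M_{m}(\gb)$ through a separate lemma asserting $1-\xb^{\hb}(1-\xb)^{\kb}+C_t'\in M_{t}(\gb)$ with $C_t'=\sum_{i\le t}C_i$; this is exactly where the factor $1+C_m'+C_m\le\tfrac32$ comes from, and it has no analogue in your route. The second piece is handled as you suggest, but with a cleaner padding: each degree-$nd$ Bernstein monomial is completed to $r$ linear factors by appending factors equal to~$1$ (not $c<1$) before invoking the conjecture with $N=r$; since $z_j(1-z_j)$ then specializes to $0$, the identity still lands in $M_r(\gb)$ and the additive constant is exactly $C_r=\tfrac{1}{r(r+2)}$. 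Multiplying by the Bernstein weight sum $2^{nd}=r$ produces the clean term $\tfrac{\pmaxQ-\pminQ}{r+2}$. Your choice $c=1-\tfrac{1}{e}$ is valid but needlessly loses a factor~$4$.

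Two further points. The coefficient-sum estimate $\sum_k\lambda_k\le C\,(\pmaxQ-\pminQ)\,2^{nd}$ you flag as delicate is in fact false with an absolute constant~$C$: the $p-B_d(p)$ portion of the De~Klerk--Laurent certificate contributes a term of order $L(p)\,m^3 n^m/d$, which is not controlled by $\pmaxQ-\pminQ$. This is harmless once divided by $M^2$, but it means your bookkeeping must keep that term separate rather than absorb it into~$C$. And your leading error, inherited verbatim from the De~Klerk--Laurent estimate, carries the constant $\tfrac{m^3}{6}$ rather than $\tfrac32\binom{m+1}{3}$; so, carried through correctly, your argument yields a cousin of the stated inequality (actually slightly sharper on the first term, slightly looser on the second) rather than the inequality itself.
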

Sect.~\ref{sec:proof} is dedicated to the proof of this result.
%This follows from Theorem~\ref{th:error_bound}, that is proved in Sect.~\ref{sec:proof}.

We summarize in Table~\ref{table:main} the known results for the bound parameters (degree and error bounds) obtained for Schmüdgen and Putinar positivity certificates when optimizing polynomials over the hypercube. Note that in the general case, our error bound is sharper than the bound in Theorem~\ref{th:ns07}~(ii), when choosing the constant $c=1$. However, we lose a factor $n m$ with respect to the exponential argument of the degree bound given in Theorem~\ref{th:ns07}~(i).

\renewcommand{\arraystretch}{1.5}
\begin{table}[!htbp]
\begin{center}
\caption{Comparing degree and error bounds of Schmüdgen/Putinar representations when optimizing polynomials over the hypercube}
\begin{tabular}{cc|ccc}
\hline
Sums of squares  &  Bound & Quadratic case~\cite{deKlerk:2010:Error} & General case & Bounds from \cite{DBLP:journals/jc/Schweighofer04,DBLP:journals/jc/NieS07}\\
representation & Parameter& $m = 2$ & $m \geq 1$ & $m \geq 1$ with $c = c' = 1$\\
\hline
\hline
\multirow{2}{*}{Schmüdgen} 
& Degree& $n^2 \frac{L(p)}{\pminQ}$ &  $m^3 n^{m+1} \frac{L(p)}{6 \pminQ}$~\cite{deKlerk:2010:Error} & $m^4 n^{m} \frac{L(p)}{\pminQ}$\\
%& & & \cite{deKlerk:2010:Error} & \\
\cline{2-5}
& Error& $n^2 \frac{L(p)}{r}$& $m^3 n^{m+1}\frac{L(p)}{6r}$  & $m^4 n^{2m} \frac{L(p)}{r}$ \\
& &for $r \geq 2 n$ & for $r \geq m n$ \cite{deKlerk:2010:Error} & for $r \geq m n^m$\\
\hline			
\multirow{2}{*}{Putinar}
& Degree & $\exp \left( 2 n \frac{L(p)}{\pminQ} \right)$ & $\exp \left( m^3 n ^{m+1} \frac{L(p)}{\pminQ} \right)$ & $\exp \left(  m^2 n ^{m} \frac{L(p)}{\pminQ} \right)$\\
\cline{2-5}
& Error  & $n \frac{L(p)}{\log_2 r} + \frac{\pmaxQ - \pminQ}{r + 2}$  
&  $m^3 n^{m+1} \frac{L(p)}{4 \log_2 r} + \frac{\pmaxQ - \pminQ}{r + 2}$ & $6 m^3 n^{2 m} \frac{L(p)}{\log r}$ \\
& &for $r \geq 2^n$ & for $r \geq \max(m, 2^n)$ & for $r \geq \exp (2 m^2 n^m)$ \\
%\hline
\hline
\end{tabular}
\label{table:main}
\end{center}			
\end{table}

\section{Proof of the main result}
\label{sec:proof}

\if{
Let $p$ be a degree $m$ multivariate polynomial. Here, we consider the following problem:
\begin{equation}
\label{eq:pop}
\pminQ  :=  \min_{\xb \in Q} p(\xb) \enspace,
\end{equation}
One also defines $\pmaxQ := \max_{\xb \in Q} p(\xb)$.
}\fi

\begin{lemma}
\label{th:cneven}
For $n$ even, $\prod_{i = 1}^n x_i + C_n \in M_n(g_1, \dots, g_n)$, for some constant $C_n \leq 1$.
\end{lemma}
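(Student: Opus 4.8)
The plan is to deduce the lemma directly from Conjecture~\ref{conj:boundconj}, which is in force throughout Section~\ref{sec:proof} since it is part of the hypothesis of Theorem~\ref{th:main}. For $n$ even, Conjecture~\ref{conj:boundconj} asserts precisely a membership of the form $\prod_{i=1}^n x_i + \tfrac{1}{n(n+2)} \in M_n(g_1,\dots,g_n)$, so it suffices to set $C_n := \tfrac{1}{n(n+2)}$ and to verify that this constant is admissible, i.e.\ that $C_n \leq 1$. Since $n \geq 2$ here, we have $n(n+2) \geq 8$, hence $C_n = \tfrac{1}{n(n+2)} \leq \tfrac18 \leq 1$, which closes the argument. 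In short, I would present the lemma as a renaming of the conjectural membership together with this trivial numerical bound.

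The reason to isolate this weaker statement, rather than carry $\tfrac{1}{n(n+2)}$ around, is to decouple the remainder of the proof of Theorem~\ref{th:main} from the precise value of the constant: downstream one only ever uses that $\prod_{i=1}^n x_i + C_n$ lies in the truncated quadratic module $M_n(\gb)$ for \emph{some} constant bounded by $1$. This also makes the argument robust to partial progress on Conjecture~\ref{conj:boundconj}: any explicit SOS certificate yielding a larger but still $\leq 1$ admissible constant feeds into Theorem~\ref{th:main} through this lemma with no change to the subsequent estimates. For instance, the case $n=2$ is unconditional via the identity $x_1 x_2 + \tfrac18 = \tfrac12\bigl(x_1+x_2-\tfrac12\bigr)^2 + \tfrac12 g_1 + \tfrac12 g_2$, and the machine verifications for small $n$ mentioned in Section~\ref{sec:intro} cover further small cases without invoking the conjecture.

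I do not expect any technical obstacle in the proof of the lemma itself; the only genuine difficulty is the one already flagged in Section~\ref{sec:intro}, namely establishing Conjecture~\ref{conj:boundconj} in full generality. Concretely, that would require exhibiting, for every even $n$, a decomposition $\prod_{i=1}^n x_i + C_n = \sigma_0 + \sum_{i=1}^n \sigma_i g_i$ with $\sigma_0 \in \Sigma[\xb]$ of degree at most $n$ and each $\sigma_i \in \Sigma[\xb]$ of degree at most $n-2$. The delicate point is that this is a \emph{quadratic module} membership and not a preordering membership: one cannot simply use products $g_i g_j$ as multipliers, so the natural multiplicative constructions built from the $n=2$ identity do not stay inside $M_n(\gb)$. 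This is exactly the module-versus-preordering gap that the present note takes as a hypothesis, which is why the lemma is only stated conditionally.
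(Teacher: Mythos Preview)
Your deduction is logically valid, but it is not the paper's route, and your framing contains an inaccuracy. The paper does \emph{not} derive Lemma~\ref{th:cneven} from Conjecture~\ref{conj:boundconj}; it simply cites \cite[Sect.~4]{deKlerk:2010:Error}, treating the existence of some admissible $C_n\le 1$ as a result already established there, independently of the conjectured optimal value $\tfrac{1}{n(n+2)}$. In particular, your claim that the conjecture is ``in force throughout Section~\ref{sec:proof}'' is not accurate: in the paper's organization, Lemmas~\ref{th:cneven}--\ref{th:handelmanputinar} and Theorem~\ref{th:putinar_bound} are stated and proved unconditionally, and the conjecture is invoked only in Theorem~\ref{th:error_bound} (and hence in Theorem~\ref{th:main}), where the specific value $C_r=\tfrac{1}{r(r+2)}$ is used to control the $2^{nd}C_r$ term and to bound $1+C_m+C_m'\le\tfrac32$.

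What this buys: the paper's route keeps a clean separation between the unconditional existence statement (some $C_n\le1$) and the conjectural sharp constant, so Theorem~\ref{th:putinar_bound} stands on its own with a constant $(1+C_m'+C_m)$ that is meaningful even without the conjecture. Your route collapses that distinction; it is shorter and perfectly adequate for the purposes of Theorem~\ref{th:main}, since the conjecture is assumed there anyway, but it makes Lemma~\ref{th:cneven} and everything built on it conditional. Your second and third paragraphs are good commentary, but they are motivation rather than proof; if you adopt your approach, the actual argument is the one-line ``set $C_n:=\tfrac{1}{n(n+2)}\le\tfrac18\le1$'' and you should flag explicitly that the lemma is then conditional on Conjecture~\ref{conj:boundconj}.
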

\begin{proof}
\label{pr:cneven}
See~\cite[ Sect. 4]{deKlerk:2010:Error}.
\qed
\end{proof}

\begin{lemma}
\label{th:cn}
For $n \in \N_0$, $\prod_{i = 1}^n x_i + C_n \in M_{2 \lceil n/2 \rceil}(g_1, \dots, g_n)$, for some constant $C_n \leq 1$.
\end{lemma}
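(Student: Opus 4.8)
The plan is to deduce the odd case from the even case, which is exactly the content of Lemma~\ref{th:cneven}. First note that when $n$ is even one has $2\lceil n/2\rceil = n$, so in that case the statement is literally Lemma~\ref{th:cneven} and nothing needs to be done. Hence I would assume $n$ is odd, in which case $2\lceil n/2\rceil = n+1$, and the goal becomes $\prod_{i=1}^n x_i + C_n \in M_{n+1}(g_1,\dots,g_n)$ for some $C_n \leq 1$.

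For odd $n$, I would apply Lemma~\ref{th:cneven} to the even integer $n+1$, obtaining a constant $C_{n+1}\le 1$ together with SOS polynomials $\sigma_0,\dots,\sigma_{n+1}\in\Sigma[\xb]$ in the variables $x_1,\dots,x_{n+1}$ with $\deg\sigma_0\le n+1$, $\deg(\sigma_i g_i)\le n+1$ for each $i$, and
\[
\prod_{i=1}^{n+1} x_i + C_{n+1} \;=\; \sigma_0 + \sum_{i=1}^{n+1}\sigma_i\, g_i .
\]
The key step is then to specialize $x_{n+1}:=1$ in this identity. On the right, $g_{n+1}=x_{n+1}(1-x_{n+1})$ vanishes at $x_{n+1}=1$, so that summand drops out; the polynomials $g_1,\dots,g_n$ do not involve $x_{n+1}$ and are unaffected; and each $\sigma_i|_{x_{n+1}=1}$ is again a sum of squares of no larger degree, since $\bigl(\sum_j q_j^2\bigr)\big|_{x_{n+1}=1}=\sum_j \bigl(q_j|_{x_{n+1}=1}\bigr)^2$. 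On the left, $\prod_{i=1}^{n+1}x_i$ becomes $\prod_{i=1}^n x_i$. Therefore
\[
\prod_{i=1}^{n} x_i + C_{n+1} \;=\; \sigma_0|_{x_{n+1}=1} + \sum_{i=1}^{n}\sigma_i|_{x_{n+1}=1}\, g_i \;\in\; M_{n+1}(g_1,\dots,g_n),
\]
with the truncation degree $n+1$ preserved throughout. Since $n+1=2\lceil n/2\rceil$ for odd $n$, setting $C_n:=C_{n+1}\le 1$ yields the claim.

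I do not expect any serious obstacle here. The only point that needs care is checking that the substitution $x_{n+1}=1$ respects membership in the truncated quadratic module $M_{n+1}$ and does not increase degrees, and this is immediate from the definitions (a value substituted into an SOS stays an SOS, and substitution never raises degree). The case $n\in\{0,1\}$, if one wishes to include $0$ in $\N_0$, is handled by the same argument or trivially by hand.
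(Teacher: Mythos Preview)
Your proposal is correct and follows essentially the same route as the paper: treat the even case as immediate from Lemma~\ref{th:cneven}, and for odd $n$ apply Lemma~\ref{th:cneven} to $n+1$ and specialize the extra variable to~$1$, defining $C_n:=C_{n+1}$. The paper additionally singles out $n=1$ with $C_1:=0$, but your uniform treatment of the odd case covers this as well; your explicit check that substitution preserves SOS membership and the degree truncation is in fact more careful than the paper's one-line ``by instantiating $x_n$ with~$1$''.
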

\begin{proof}
\label{pr:cn}
For $n$ even, it comes from Lemma~\ref{th:cneven}. The case $n=1$ is trivial ($C_1 := 0$). Now, suppose that $n$ is odd with $n = 2 l - 1$ for some $l \geq 2$. Define $C_{2 l - 1} := C_{2 l}$. From Lemma~\ref{th:cneven}, one has the following representation:
\[\prod_{i = 1}^{2 l} x_i + C_{2 l} =  \sum_{i = 1}^{2 l} \sigma_i(\xb) g_i(\xb) \enspace, \]
where $\sigma_0, \dots, \sigma_{2 l}$ are sums of squares of polynomials.
By instantiating $x_n$ with 1, one gets the desired result. \qed
\end{proof}

For $n \in \N_0$, let $C_n$ be as in Lemma~\ref{th:cn} and define $C_n' :=  \sum_{i = 1}^{n} C_i$.
The next lemma allows to express any degree $t$ term $-\lambda \xb^\hb (1 - \xb)^\kb + C_t'$ with positive $\lambda$  as $-\lambda + q$ for some $q \in M_{2 \lceil t/2 \rceil} (\gb)$. %relates the Putinar and Handelman modules by

\if{
\begin{lemma}
\label{th:cnaux}
Then, one has:
\[1 - \prod_{i = 1}^n x_i + C_n'  \in M_{2 \lceil n/2 \rceil}(g_1, \dots, g_n) \enspace .\]
\end{lemma}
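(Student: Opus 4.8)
The plan is to reduce the statement to Lemma~\ref{th:cn} by combining an elementary telescoping identity with the invariance of the quadratic module under the sign flips $x_k \mapsto 1 - x_k$. Writing $\pi_k := \prod_{i=1}^k x_i$ (with $\pi_0 := 1$ and $\pi_n = \prod_{i=1}^n x_i$), the first step is the observation
\[ 1 - \prod_{i=1}^n x_i = \sum_{k=1}^n (\pi_{k-1} - \pi_k) = \sum_{k=1}^n \pi_{k-1}(1 - x_k) \enspace, \]
which expresses the left-hand side as a sum of $n$ terms, each of which is the partial product $\pi_k$ with its last factor $x_k$ replaced by $1 - x_k$.

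The second step provides a certificate for each summand. For every $k$, Lemma~\ref{th:cn} (applied to the first $k$ variables) yields $\pi_k + C_k \in M_{2\lceil k/2\rceil}(g_1,\dots,g_k)$. I would then apply the affine substitution $\tau_k : x_k \mapsto 1 - x_k$, leaving the other variables fixed. Since $\tau_k$ is a degree-preserving ring automorphism that sends sums of squares to sums of squares and fixes every generator $g_i = x_i(1-x_i)$ (in particular $g_k$, because $(1-x_k)\bigl(1-(1-x_k)\bigr) = g_k$, while $g_1,\dots,g_{k-1}$ do not involve $x_k$), it maps $M_{2\lceil k/2\rceil}(g_1,\dots,g_k)$ onto itself. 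Applying $\tau_k$ to the membership above, and noting $\tau_k(\pi_k) = \pi_{k-1}(1 - x_k)$ and $\tau_k(C_k) = C_k$, gives
\[ \pi_{k-1}(1 - x_k) + C_k \in M_{2\lceil k/2\rceil}(g_1,\dots,g_k) \subseteq M_{2\lceil n/2\rceil}(\gb) \enspace, \]
the last inclusion holding because $k \leq n$ forces $2\lceil k/2\rceil \leq 2\lceil n/2\rceil$ and the generators $g_1,\dots,g_k$ are among $g_1,\dots,g_n$.

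The final step is to sum these $n$ memberships. Since $M_{2\lceil n/2\rceil}(\gb)$ is closed under addition, the telescoping identity of the first step together with $\sum_{k=1}^n C_k = C_n'$ gives
\[ 1 - \prod_{i=1}^n x_i + C_n' = \sum_{k=1}^n \bigl[ \pi_{k-1}(1-x_k) + C_k \bigr] \in M_{2\lceil n/2\rceil}(\gb) \enspace, \]
which is the claim. The only delicate point is the second step: one must check carefully that $\tau_k$ preserves the truncation degree (it does, being affine and invertible) and that it fixes the truncated quadratic module as a set, so that membership transfers verbatim with the same degree bound; the telescoping and the nesting of truncation orders are then pure bookkeeping.
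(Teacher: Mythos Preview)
Your proof is correct and takes essentially the same approach as the paper. The paper argues by induction on $n$, writing $1 - \pi_n + C_n' = (1 - \pi_{n-1} + C_{n-1}') + \bigl((1-x_n)\pi_{n-1} + C_n\bigr)$ and invoking Lemma~\ref{th:cn} on the second summand; this is exactly your telescoping sum written recursively, and your explicit verification that the substitution $\tau_k$ preserves the truncated quadratic module spells out a step the paper leaves implicit.
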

}\fi

\begin{lemma}
\label{th:handelmanputinar}
For every $\hb, \kb \in \N^n, 1 -\xb^\hb (1 - \xb)^\kb + C_t' \in M_{2 \lceil t/2 \rceil} (g_1, \dots, g_n)$, where $t := |\hb + \kb| = \deg(\xb^\hb (1 - \xb)^\kb)$.
\end{lemma}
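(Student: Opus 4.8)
The plan is to reduce the statement to Lemma~\ref{th:cn} by a telescoping identity combined with a linear change of variables. Write $\xb^\hb(1-\xb)^\kb = \ell_1 \ell_2 \cdots \ell_t$ as a product of $t = |\hb+\kb|$ linear factors, where each $\ell_s$ is one of the polynomials $x_i$ or $1 - x_i$ (with the appropriate multiplicities). Setting $A_j := \ell_1 \cdots \ell_j$ and $A_0 := 1$, one has the telescoping identity
\[
1 - \xb^\hb(1-\xb)^\kb \;=\; A_0 - A_t \;=\; \sum_{j=1}^{t}(A_{j-1} - A_j) \;=\; \sum_{j=1}^{t}\Bigl(\prod_{s<j}\ell_s\Bigr)(1 - \ell_j) \;=:\; \sum_{j=1}^{t} P_j ,
\]
where, crucially, each $P_j = \bigl(\prod_{s<j}\ell_s\bigr)(1-\ell_j)$ is again a product of exactly $j$ linear factors drawn from $\{x_i, 1-x_i : 1 \le i \le n\}$, since $1 - \ell_j$ is itself of this form.

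The key step is then the claim that for any polynomial $P = \ell'_1 \cdots \ell'_j$ that is a product of $j$ such linear factors one has $P + C_j \in M_{2\lceil j/2\rceil}(\gb)$. To prove it, consider the $\R$-algebra homomorphism $\phi \colon \R[x_1,\dots,x_j] \to \R[x_1,\dots,x_n]$ determined by $x_s \mapsto \ell'_s$. Since each $\ell'_s$ is linear, $\phi$ does not increase degrees and carries sums of squares to sums of squares; and since $\ell'_s \in \{x_i, 1-x_i\}$ forces $\phi(x_s - x_s^2) = \ell'_s(1-\ell'_s) = x_i(1-x_i) = g_i$ for the corresponding index $i$, the image under $\phi$ of any member of the $r$-truncated quadratic module generated by $g_1,\dots,g_j$ lies in $M_r(\gb)$. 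Applying $\phi$ to the representation $\prod_{s=1}^{j} x_s + C_j \in M_{2\lceil j/2\rceil}(g_1,\dots,g_j)$ furnished by Lemma~\ref{th:cn} gives $P + C_j = \phi\bigl(\prod_{s=1}^{j} x_s + C_j\bigr) \in M_{2\lceil j/2\rceil}(\gb)$.

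Putting the two ingredients together and recalling that $C_t' = \sum_{j=1}^{t} C_j$,
\[
1 - \xb^\hb(1-\xb)^\kb + C_t' \;=\; \sum_{j=1}^{t}(P_j + C_j) ,
\]
and each summand lies in $M_{2\lceil j/2\rceil}(\gb) \subseteq M_{2\lceil t/2\rceil}(\gb)$ since $j \le t$; as a truncated quadratic module is closed under addition, the sum lies in $M_{2\lceil t/2\rceil}(\gb)$, which is the assertion. I expect the change-of-variables step to be the delicate point: one must verify that the substitution $x_s \mapsto \ell'_s$ genuinely sends each generator $g_s$ to one of $g_1,\dots,g_n$ — which works precisely because $x_i(1-x_i)$ is invariant under $x_i \mapsto 1 - x_i$, even when several $\ell'_s$ coincide — and that degrees are not inflated, so that the truncation level $2\lceil j/2\rceil$ is genuinely preserved.
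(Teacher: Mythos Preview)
Your proof is correct and follows essentially the same approach as the paper: a telescoping decomposition of $1-\ell_1\cdots\ell_t$ into products of at most $t$ linear factors, each of which is pushed into the truncated quadratic module via Lemma~\ref{th:cn} together with the substitution $x_s\mapsto\ell'_s$ (which preserves the generators because $x_i(1-x_i)$ is invariant under $x_i\mapsto 1-x_i$). The only organisational difference is that the paper first establishes the special case $1-\prod_{i=1}^t x_i + C_t'$ in $t$ distinct variables by induction (which is your telescoping sum written recursively) and then performs a single global substitution at the end, whereas you telescope directly on the mixed product and invoke the substitution separately on each summand; the content is the same.
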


\begin{proof}
\label{pr:cnaux}
First, we show by induction on $n$ that 
\begin{align}
\label{eq:handelmanputinaraux}
1 - \prod_{i = 1}^n x_i + C_n'  \in M_{2 \lceil n/2 \rceil}(g_1, \dots, g_n) \enspace .
\end{align}
For the univariate case it comes from $1 - x_1 = (1 - x_1)^2 + g_1 (x_1)$. Now, let consider the multivariate case $n \geq 2$. One has $1 - \prod_{i = 1}^n x_i + C_n' = 1 - \prod_{i = 1}^{n - 1} x_i + C_{n - 1}' + (1 - x_n) \prod_{i = 1}^{n - 1} + C_n$. Then we conclude using induction applied on the term $1 - \prod_{i = 1}^{n - 1} x_i + C_{n - 1}'$ and Lemma~\ref{th:cn} on the term $(1 - x_n) \prod_{i = 1}^{n - 1} x_i + C_n$.

Now, we introduce new variables $\yb := (x_{|h|+1},\dots, x_{t})$ and replace $1 - \xb$ by $\yb$. Using~\eqref{eq:handelmanputinaraux} on the term $1 - \prod_{i = 1}^t x_i + C_t'$ yields the desired result. \qed
\end{proof}

\begin{theorem}
\label{th:putinar_bound}
Let $p \in \R_m[\xb]$. For all $d \in \N_0$,
%$Q = [0, 1]^n$ and $g_1, \dots, g_n$ as in~\ref{eq:g}. For any integer $d \in \N_0$,  
%there exists an integer $r \leq \max (d n, m)$ such that 
$p - B_d(p) + (1 + C_m' + C_m) \frac{L(p)}{d}  \binom{m+1}{3} n^m \in M_{2 \lceil m/2 \rceil}(\gb)$.
\end{theorem}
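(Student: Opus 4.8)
The plan is to control the difference $p - B_d(p)$ term by term after expressing $B_d(p)$ in the Bernstein basis $P_{d,\kb}$, and to use Lemma~\ref{th:handelmanputinar} to absorb each monomial-in-Bernstein-form back into the quadratic module $M_{2\lceil m/2\rceil}(\gb)$ at the cost of a constant and a factor $1/d$. First I would write $p - B_d(p) = \sum_{\kb \in \don} \bigl( p - p(\kb/d) \bigr) P_{d,\kb}$, using that $\sum_{\kb} P_{d,\kb} = 1$. The natural next move is to bound, for each $\kb$, the quantity $|p(\xb) - p(\kb/d)|$ for $\xb \in Q$ by something of the form $(\text{const}) \cdot L(p) \cdot \binom{m+1}{3} n^m / d$; this is the standard uniform estimate underlying Bernstein approximation (it appears in \cite{deKlerk:2010:Error} in the Schmüdgen setting), obtained by Taylor-expanding $p$ and summing the binomial coefficient bounds $p_{\kb} \kb!/|\kb|!$ that define $L(p)$, together with the moment estimates $\sum_k (k/d - x)^j \binom{d}{k} x^k (1-x)^{d-k}$. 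The degree of each $P_{d,\kb}$ is at most $nd$ in the Bernstein approximation, but crucially each $P_{d,\kb}$ is a product of one-variable Bernstein atoms $\binom{d}{k_i} x_i^{k_i}(1-x_i)^{d-k_i}$, hence a nonnegative multiple of a term of the form $\xb^\hb(1-\xb)^\kb$ with $|\hb + \kb| \le nd$ — wait, but Lemma~\ref{th:handelmanputinar} is stated with the exponent of the conjecture-based membership, $2\lceil t/2\rceil$ with $t = |\hb+\kb|$, so I must be careful that the degree bookkeeping really lands in $M_{2\lceil m/2\rceil}(\gb)$ and not a larger module; the resolution is that one does not apply it to the full product but reorganizes so that the relevant degree is $m$, not $nd$.

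Concretely, the key algebraic step is: for a positive constant $\lambda$ and a Bernstein-type term, $-\lambda \xb^\hb(1-\xb)^\kb + \lambda(1 + C_t')$ lies in $\lambda M_{2\lceil t/2\rceil}(\gb)$ by Lemma~\ref{th:handelmanputinar} (multiply the membership $1 - \xb^\hb(1-\xb)^\kb + C_t' \in M_{2\lceil t/2\rceil}(\gb)$ by $\lambda \ge 0$). So each ``negative'' contribution to $p - B_d(p)$, after the coefficient bound, can be written as $-\lambda + q$ with $q \in M_{2\lceil t/2\rceil}(\gb)$ and $\lambda = O(L(p) \binom{m+1}{3} n^m / d)$; summing over the (at most $(d+1)^n$) values of $\kb$ and using additivity of quadratic modules gives $p - B_d(p) + (\text{const})\,\frac{L(p)}{d}\binom{m+1}{3} n^m \in M_{2\lceil m/2\rceil}(\gb)$, once the degree $t$ appearing is controlled by $m$. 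The constant should come out as $1 + C_m' + C_m$: one copy of ``$1$'' from the $+1$ in Lemma~\ref{th:handelmanputinar}, the $C_m'$ from the accumulated $\sum C_i$, and an extra $C_m$ from the odd-to-even padding in Lemma~\ref{th:cn}.

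The main obstacle I anticipate is the degree accounting: the multivariate Bernstein polynomial $B_d(p)$ has degree $nd$, yet the claimed membership is in the fixed-order module $M_{2\lceil m/2\rceil}(\gb)$, whose degree does not grow with $d$. The point must be that one does \emph{not} expand the whole $B_d(p)$ into honest monomials; instead one works with $p - p(\kb/d)$ as a polynomial of degree $m$ and uses the product structure of $P_{d,\kb}$ — each factor $x_i^{k_i}(1-x_i)^{d-k_i}$ is a nonnegative polynomial (indeed a square times a power of $g_i$ when exponents are split evenly, using $x_i(1-x_i) = g_i$ and $x_i = x_i^2 + g_i$, $1-x_i = (1-x_i)^2 + g_i$), so $P_{d,\kb} \in M_?(\gb)$ with controllable order, and the $1/d$ gain comes entirely from the analytic estimate on $p - p(\kb/d)$, not from the degree of the Bernstein atoms. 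Getting the exponent of the module to be exactly $2\lceil m/2\rceil$ — i.e. showing the degree-$m$ part of $p$ is what governs things, with the high-degree Bernstein atoms contributing only to multipliers already inside the module — is the delicate bookkeeping that the rest of this section presumably carries out.
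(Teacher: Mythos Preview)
There is a real gap. Your decomposition $p - B_d(p) = \sum_{\kb\in\don}\bigl(p - p(\kb/d)\bigr)P_{d,\kb}$ cannot produce the claimed $O(1/d)$ constant: for fixed $\kb$ and $\xb\in Q$ the quantity $|p(\xb)-p(\kb/d)|$ is of order $\pmaxQ-\pminQ$, not $O(1/d)$; the $1/d$ rate of Bernstein approximation comes from cancellation across the whole sum (via the second-moment identity you mention), not from any individual summand being small. Moreover, each summand carries a degree-$m$ \emph{polynomial} factor $p-p(\kb/d)$ in front of $P_{d,\kb}$, not a nonnegative scalar, so even granting $P_{d,\kb}\in M_r(\gb)$ the product has no reason to remain in the module. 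Neither Lemma~\ref{th:cn} nor Lemma~\ref{th:handelmanputinar} applies to such products, and summing $(d+1)^n$ terms only makes the constant worse.

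The paper uses a different decomposition, taken from \cite[Sect.~3.2]{deKlerk:2010:Error}, which exploits the fact that the univariate Bernstein operator is degree-preserving on polynomials: $x^k-B_d(x^k)=\sum_{i=0}^{k}a_i^{(k)}x^i$ with $|a_i^{(k)}|\le\frac{1}{d}\binom{k}{2}$. Telescoping the tensor product along coordinates yields
\[
p-B_d(p)=-\sum_{|\kb|\le m}p_\kb\sum_{j=1}^n q_{\kb,j}\sum_{i_j=0}^{k_j}a_{i_j}^{(k_j)}x_j^{i_j},
\]
where each $q_{\kb,j}$ has degree $|\kb|-k_j$ and is a convex combination of terms $\xb^\hb(1-\xb)^\ib$. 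Thus every building block $q_{\kb,j}x_j^{i_j}$ has degree at most $|\kb|\le m$, and the $O(1/d)$ sits in the \emph{scalar} coefficients $a_{i_j}^{(k_j)}$. One then splits by the signs of $p_\kb$ and $a_{i_j}^{(k_j)}$, applies Lemma~\ref{th:cn} to the positive pieces $q_{\kb,j}x_j^{i_j}$ (cost $C_m$) and Lemma~\ref{th:handelmanputinar} to the pieces $1-q_{\kb,j}x_j^{i_j}$ (cost $C_m'$), and sums the coefficient bounds to get exactly $(1+C_m'+C_m)\tfrac{L(p)}{d}\binom{m+1}{3}n^m$. The answer to your degree puzzle---why the module order is $2\lceil m/2\rceil$ rather than tied to $d$---is precisely this degree preservation of $B_d$ on polynomials, which is visible only through the de Klerk--Laurent telescoping and not through the raw Bernstein-basis expansion you start from.
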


\begin{proof}
\label{pr:putinar_bound}
Fix an integer $d \in \N_0$. As in ~\cite[Sect. 3.2]{deKlerk:2010:Error}, one can write:
\begin{align*}
p - B_d(p) = - \sum_{|k| \leq m} p_\kb ( \sum_{j = 1}^n q_{\kb, j} (\sum_{i_j = 0}^{k_j} a_{i_j}^{(k_j)} x_j^{i_j}  ) ) \enspace, 
\end{align*}
where each degree $|\kb| - k_j$ polynomial $q_{\kb, j}$ can be written as $\sum_{\hb, \ib} \lambda_{\hb, \ib} \xb^{\hb} (1 - \xb)^\ib$ for some nonnegative coefficients $\lambda_{\hb, \ib}$ summing up to 1.

%Thus, using Corollary~\ref{th:handelmanputinar}, one can write each $q_{k, j}$ as $\sum_{h, i} \lambda_{h, i}( 1 +  C_{k - k_j}') - q_{k, j}'$ for some $q_{k, j}' \in M_{2 \lceil (|k| - k_j)/2 \rceil} (g_1, \dots, g_n)$.

Then, we split the sum depending on the signs of $p_k$ and of $a_{i_j}^{(k_j)}$ to obtain the following decomposition:
\begin{align*}
p - B_d(p) & = \sum_{\kb | p_\kb > 0} p_\kb \Bigl( \sum_{j = 1}^n   \sum_{i_j = 0}^{k_j - 1} a_{i_j}^{(k_j)} (1 - q_{\kb, j} x_j^{i_j}  ) \Bigr)
                + \sum_{\kb | p_\kb < 0} |p_\kb| \Bigl( \sum_{j = 1}^n   |a_{k_j}^{(k_j)}| (1 - q_{\kb, j} x_j^{k_j}  ) \Bigr)\\
                & + \sum_{\kb | p_\kb < 0} |p_\kb| \Bigl( \sum_{j = 1}^n   \sum_{i_j = 0}^{k_j - 1} a_{i_j}^{(k_j - 1)} q_{\kb, j} x_j^{i_j} \Bigr)
                + \sum_{\kb | p_\kb > 0} p_\kb \Bigl( \sum_{j = 1}^n   |a_{k_j}^{(k_j)}| q_{\kb, j} x_j^{k_j}  \Bigr)
                - C(d, p) \enspace,
\end{align*}
where $C(d, p):= \sum_{\kb | p_\kb > 0} p_\kb  \Bigl(\sum_{j = 1}^n   \sum_{i_j = 0}^{k_j - 1} a_{i_j}^{(k_j)}\Bigr)
+ \sum_{\kb | p_\kb < 0} |p_\kb| \Bigl( \sum_{j = 1}^n   |a_{k_j}^{(k_j)}| \Bigr)$.

It follows from Lemma~\ref{th:cn} that each $q_{\kb, j} x_j^{i_j}$ can be expressed as $q - C_m$ for some $q \in M_{2 \lceil m/2 \rceil} (\gb)$. Similarly using Lemma~\ref{th:handelmanputinar}, each term $1 - q_{\kb, j} x_j^{i_j} + C_m'$ lies in the quadratic module $M_{2 \lceil m/2 \rceil} (\gb)$. Hence, there exists some $q' \in M_{2 \lceil m/2 \rceil} (\gb)$ such that:
\begin{align*}
p - B_d(p)  = q' - (1 + C_m' + C_m) \underbrace{\sum_{|\kb| \leq m} |p_\kb| \Bigl( \sum_{j = 1}^n \sum_{i_j = 0}^{k_j} |a_{i_j}^{(k_j)}| \Bigr)}_{:= C'(d, p)} \enspace .
\end{align*}
As in ~\cite[Sect. 3.2]{deKlerk:2010:Error}, one bounds the constant $C'(d, p)$ using the fact that $|a_{i_j}^{(k_j)}|, \sum_{i_j = 0}^{k_j - 1} a_{i_j}^{(k_j)} \leq \frac{1}{d} \binom{k_j}{2}$ and $|p_\kb| \leq L(p)\frac{|\kb|!}{\kb!}$.
Finally, one obtains  $C'(d, p) \leq \frac{L(p)}{d} \binom{m+1}{3} n^m$, the desired result.\qed
\end{proof}

Theorem~\ref{th:putinar_bound} allows us to derive the following error bound for the Lasserre hierarchy of approximations in the multivariate case:
\begin{theorem}
\label{th:error_bound}
Let $p \in \R_m[\xb]$ and define $r := 2^{n d}$ for some $d \in \N_0$ such that $r \geq m$. Assume that Conjecture~\ref{conj:boundconj} holds. Then, we obtain the following error bound for Putinar type representation when minimizing $p$ over the hypercube $Q$:
\[ \pminQ - \pputr \leq  \frac{3}{2} \dfrac{L(p)}{\log_2 r}   \binom{m+1}{3} n^{m + 1} + \dfrac{\pmaxQ - \pminQ}{r + 2} \enspace . \]
\end{theorem}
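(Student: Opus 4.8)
The plan is to combine the Bernstein-approximation membership statement of Theorem~\ref{th:putinar_bound} with the classical estimate on how well the Bernstein polynomial $B_d(p)$ approximates $p$ on the hypercube, and then convert the resulting membership in $M_{2\lceil m/2\rceil}(\gb)$ into a membership in $M_r(\gb)$ for $r = 2^{nd}$ via a degree-counting argument. First I would recall (as in~\cite{deKlerk:2010:Error}) the uniform bound $\|p - B_d(p)\|_{\infty, Q} \leq \frac{C}{d}\binom{m+1}{3} n^{m+1} L(p)$ for an appropriate constant (here one must be careful: the constant that actually appears is governed by $1 + C_m' + C_m$, and since $C_i \leq 1$ and $C_m' = \sum_{i=1}^m C_i \leq m$, we can bound $1 + C_m' + C_m \leq m + 2 \leq \tfrac{3}{2}m$ for $m \geq 4$, and check the small cases separately, so that the prefactor $\tfrac{3}{2}$ in the statement is legitimate — I expect this bookkeeping to be one of the fussier points). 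Then from Theorem~\ref{th:putinar_bound}, writing $\mu_d := \pminQ - \frac{3}{2}\frac{L(p)}{d}\binom{m+1}{3} n^{m+1}$ (up to the constant normalization above), we get that $p - \mu_d$ is a nonnegative-on-$Q$ quantity lying in $M_{2\lceil m/2\rceil}(\gb)$ plus a constant, hence, after absorbing, $p - \mu_d \in M_{t}(\gb)$ for some degree $t$ that is $O(m)$ but crucially does \emph{not} grow with $d$.

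The second ingredient is the degree lift. Here is where I would use that the quadratic module generated by $g_1,\dots,g_n$ is archimedean on $Q$ together with the standard trick for \emph{raising the truncation degree at no cost in the objective}: since $1 \in M_0(\gb)$ trivially and, more to the point, the constant $\frac{\pmaxQ-\pminQ}{r+2}$ term suggests using the univariate identity controlling powers of $x_i(1-x_i)$ — one shows that for any $r$, the polynomial $(\pmaxQ - p) \cdot (\text{something of degree} \leq r) $ can be made to lie in $M_r(\gb)$ with the error $\frac{\pmaxQ - \pminQ}{r+2}$, exactly the De Klerk–Laurent mechanism that produces the $\frac{1}{r+2}$ term in the quadratic case. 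Concretely I would split $\pminQ - \pputr$ into two pieces: $(\pminQ - \mu_d)$, controlled by Theorem~\ref{th:putinar_bound} once $2\lceil m/2\rceil \leq r$, which holds since $r \geq m$; and $(\mu_d - \pputr)$, which I bound by showing $p - \mu_d + \frac{\pmaxQ-\pminQ}{r+2} \in M_r(\gb)$, i.e. that $\pputr \geq \mu_d - \frac{\pmaxQ-\pminQ}{r+2}$.

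Finally I would substitute $d = \log_2 r / n$ (which is exactly the relation $r = 2^{nd}$, so $\frac1d = \frac{n}{\log_2 r}$), turning the $\frac{1}{d}\binom{m+1}{3}n^{m+1}$ term into $\frac{n}{\log_2 r}\binom{m+1}{3}n^{m+1}$ — wait, that gives an extra $n$; so in fact the $n^{m+1}$ in the statement already accounts for the substitution, meaning Theorem~\ref{th:putinar_bound}'s bound must carry $n^m$ (which it does) and the extra factor $n$ comes precisely from $\frac1d = \frac{n}{\log_2 r}$. Assembling: $\pminQ - \pputr \leq (\pminQ - \mu_d) + (\mu_d - \pputr) \leq \frac{3}{2}\frac{L(p)}{\log_2 r}\binom{m+1}{3}n^{m+1} + \frac{\pmaxQ - \pminQ}{r+2}$, as claimed. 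The main obstacle, I expect, is the second step — producing the $\frac{\pmaxQ - \pminQ}{r+2}$ term cleanly in the \emph{multivariate} setting, since one needs a representation of $\pmaxQ - p$ (or rather of $\mu_d - p + \varepsilon$) inside $M_r(\gb)$ with explicitly controlled truncation degree $r$; this is presumably where Conjecture~\ref{conj:boundconj} (via Lemmas~\ref{th:cn} and~\ref{th:handelmanputinar}, which already invoke it) enters, allowing every Handelman-type monomial $\xb^\hb(1-\xb)^\kb$ appearing in a Bernstein-style expansion of $\pmaxQ - p$ of degree $\leq r$ to be promoted into $M_{2\lceil r/2\rceil}(\gb) \subseteq M_r(\gb)$.
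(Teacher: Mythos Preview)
Your overall architecture matches the paper's: combine Theorem~\ref{th:putinar_bound} with a Bernstein-basis argument that places $B_d(p-\pminQ)$, up to an additive constant, inside $M_r(\gb)$, then substitute $d=\tfrac{\log_2 r}{n}$. But two steps, as written, do not go through.

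First, your bound on $1+C_m'+C_m$. Using only $C_i\le 1$ you get $1+C_m'+C_m\le m+2$, and you then write ``$\le \tfrac32 m$ \dots\ so that the prefactor $\tfrac32$ in the statement is legitimate''. It is not: the statement carries the absolute constant $\tfrac32$, not $\tfrac32 m$, so your estimate is off by a factor $m$. The paper obtains $1+C_m'+C_m\le\tfrac32$ precisely by invoking Conjecture~\ref{conj:boundconj} a second time, i.e.\ by using the explicit values $C_{2l}=\tfrac{1}{2l(2l+2)}$ (and $C_{2l-1}=C_{2l}$), so that $C_m'=\sum_{i\le m}C_i$ telescopes to $\tfrac38-\tfrac{1}{m+2}$. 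Without the conjecture's numerical content here you cannot reach the stated constant.

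Second, the origin of $\dfrac{\pmaxQ-\pminQ}{r+2}$. This is not a ``degree-lifting'' of the membership produced by Theorem~\ref{th:putinar_bound} (that inclusion $M_{2\lceil m/2\rceil}(\gb)\subseteq M_r(\gb)$ is free once $r\ge m$), nor does it come from a univariate identity for powers of $x_i(1-x_i)$. The paper's mechanism is: expand
\[
B_d(p-\pminQ)=\sum_{\kb\in\don}\Bigl(p\bigl(\tfrac{\kb}{d}\bigr)-\pminQ\Bigr)\,\prod_{i=1}^n\binom{d}{k_i}x_i^{k_i}(1-x_i)^{d-k_i},
\]
apply Lemma~\ref{th:cn} to each degree-$nd$ product to write it as $q_{d,\kb}-C_r$ with $q_{d,\kb}\in M_r(\gb)$, and observe that the total binomial weight is $\sum_{\kb}\prod_i\binom{d}{k_i}=2^{nd}$. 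This yields
\[
B_d(p-\pminQ)+(\pmaxQ-\pminQ)\,C_r\,2^{nd}\in M_r(\gb),
\]
and \emph{this} is where the choice $r=2^{nd}$ and the conjectured value $C_r=\tfrac{1}{r(r+2)}$ combine to give exactly $(\pmaxQ-\pminQ)/(r+2)$. Your closing sentence gestures at the Bernstein expansion but applied to $\pmaxQ-p$ and with monomials ``of degree $\le r$''; the correct object is $B_d(p-\pminQ)$, the monomials have degree $nd$, and the factor $2^{nd}$ counting their total weight is what makes $r=2^{nd}$ the right choice.
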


\begin{proof}
\label{pr:error_bound}
First, we need to show the following, for every $d \in \N_0$ and even $r \geq n d$:
\begin{align}
\label{eq:aux}
B_d(p - \pminQ) + (\pmaxQ - \pminQ) C_r 2^{n d} \in M_r(\gb) \enspace .
\end{align}
%Let $r$ be an even integer such that $r \geq n d$.
Using the property of the $n$-variate Bernstein polynomial $\sum_{\kb \in \don} P_{d, \kb} = 1$, one has:
\[ B_d(p - \pminQ) = \sum_{\kb \in \don} \Bigl( p\Bigl(\frac{\kb}{d}\Bigr) - \pminQ \Bigr) P_{d, \kb} = \sum_{\kb \in \don} \binom{d}{k_i} \Bigl( p\Bigl(\frac{\kb}{d}\Bigr) - \pminQ \Bigr) \prod_{i = 1}^n  x_i^{k_i} (1 - x_i)^{d - k_i} \enspace. \]
As a consequence of Lemma~\ref{th:cn}, each polynomial  $\prod_{i = 1}^n x_i^{k_i} (1 - x_i)^{d - k_i}$ (of degree at most $n d$) can be written $q_{d, \kb} - C_r$ for some $q_{d, \kb} \in M_r(\gb)$.
Moreover, $2^{n d} = \prod_{i = 1}^n \sum_{k_i = 0}^d \binom{d}{k_i} = \sum_{\kb \in \don} \prod_{i = 1}^n \binom{d}{k_i}$. 
Therefore, one has the decomposition:
\[ B_d(p - \pminQ) + (\pmaxQ - \pminQ) C_r 2^{n d} =  \sum_{\kb \in \don} \binom{d}{k_i}  
\Bigl(p\Bigl(\frac{\kb}{d}\Bigr) - \pminQ\Bigr) q_{d, \kb} + C_r (\pmaxQ - p\Bigl(\frac{\kb}{d}\Bigr)\Bigr)
  \enspace .\]
This achieves the proof of~\eqref{eq:aux}.

Now, by combining the result of Theorem~\ref{th:putinar_bound} with~\eqref{eq:aux}, one obtains for each $r \geq \max(n d, m)$:
\[ p - \Bigl (\pminQ - (1 + C_m' + C_m) \frac{L(p)}{d}  \binom{m+1}{3} n^m - (\pmaxQ - \pminQ) C_r 2^{n d}  \Bigr) \in M_r(\gb) \enspace . \]
By setting $r := 2^{n d}$ and under the assumption $C_r = \frac{1}{r(r + 2)}$, one has for each $d \in \N_0$ such that $r \geq m$:
\[ p - \Bigl (\pminQ -  (1 + C_m' + C_m) \dfrac{L(p)}{\log_2 r}   \binom{m+1}{3} n^{m + 1} - \dfrac{\pmaxQ - \pminQ}{r + 2}   \Bigr) \in M_r(\gb) \enspace . \]
Then, note that for even $m\geq 4$, $C_m' := C_2 + \sum_{k = 3}^m C_k' = \frac{3}{8} - \frac{1}{m + 2}$. Thus, for all $m \geq 2,1 + C_m + C_m' \leq \frac{3}{2}$ and one has:
\begin{align}
\label{eq:error_bound}
 p - \Bigl (\pminQ -  \frac{3}{2} \dfrac{L(p)}{\log_2 r}   \binom{m+1}{3} n^{m + 1} - \dfrac{\pmaxQ - \pminQ}{r + 2}   \Bigr) \in M_r(\gb) \enspace ,
\end{align}
which implies the desired result.\qed
\end{proof}

\paragraph{Proof of Theorem~\ref{th:main}}
Here we focus on proving the statement (i) about the degree bound, since (ii) follows directly from Theorem~\ref{th:error_bound}. Assume that $\pminQ > 0$, $n \geq 2$ and $r \geq m \geq 2$. Using the inequality $|p_\kb| \leq L(p)\frac{|\kb|!}{\kb!}$ and the identity $\sum_{|k|=l} \frac{|\kb|!}{\kb!} = n^l$, for $k \in \N^n$, we obtain that $\pmaxQ \leq L(p) \sum_{|k| \leq m} \frac{|\kb|!}{\kb!} = L(p) \frac{n^{m+1}-1}{n - 1} \leq L(p) n^{m+1}$.
Moreover, one easily shows that $\frac{1}{r+2} \leq \frac{1}{4 \log_2 r}$ and  $\binom{m+1}{3} \leq \frac{m^3}{6}$. Hence, one has:
\[
\frac{3}{2} \frac{L(p)}{\log_2 r} \binom{m+1}{3} n^{m + 1} + \frac{\pmaxQ - \pminQ}{r + 2} \leq m^3 n^{m+1} \frac{L(p)}{2 \log_2 r}
\:.
\]
%Next, we 
As a consequence of~\eqref{eq:error_bound}, $p \in M_r(\gb)$ for some integer $r$ such that $\log r  \leq \left\lceil \left( m^3 n ^{m+1} \dfrac{L(p)}{\pminQ} \right)  \right\rceil$, which concludes the proof. \qed

%\bibliographystyle{plain}
%\bibliography{mybib}

\end{document}